\newtheorem{theorem}{Theorem}
\newtheorem{lemma}[theorem]{Lemma}
\newtheorem{remark}[theorem]{Remark}
\numberwithin{equation}{section}
\numberwithin{theorem}{section}
\newcommand{\la}{\Delta}
\def\na{\nabla}
\renewcommand{\d}{\delta}
\newcommand{\ra}{\rightarrow}
\newcommand{\p}{\partial}
\newcommand{\f}{\frac}
\def\a{\alpha}
\def\lam{\lambda}
\def\e{\epsilon}
\newcommand{\be}{\begin{equation}}
\renewcommand{\ra}{\rightarrow}
\newcommand{\ee}{\end{equation}}
\newcommand{\bea}{\begin{eqnarray}}
\newcommand{\eea}{\end{eqnarray}}
\newcommand{\bna}{\begin{eqnarray*}}
\newcommand{\ena}{\end{eqnarray*}}
\journal{***}
\begin{document}

\begin{frontmatter}

\title{Another remark on a result of Ding-Jost-Li-Wang}

\author{Xiaobao Zhu}
  \ead{zhuxiaobao@ruc.edu.cn}
\address{ School of Mathematics,
Renmin University of China, Beijing 100872, P. R. China\\
Dedicated to Professor Jiayu Li on the occasion of his 60th birthday}

\begin{abstract}

Let $(M,g)$ be a compact Riemann surface, $h$ be a positive smooth function on $M$. It is well known that the functional
$$J(u)=\frac{1}{2}\int_M|\nabla u|^2dv_g+8\pi\int_M udv_g-8\pi\log\int_Mhe^{u}dv_g$$
achieves its minimum under Ding-Jost-Li-Wang condition. This result was generalized to nonnegative $h$
by Yang and the author. Later, Sun and Zhu (arXiv:2012.12840) showed the Ding-Jost-Li-Wang condition is also sufficient when $h$ changes sign,
which was reproved later by Wang and Yang (J. Funct. Anal. 282: Paper No. 109449, 2022)
and Li and Xu (Calc. Var. 61: Paper No. 143, 2022) respectively using a flow approach. The aim of this note is to give a new proof of Sun and Zhu's result. Our proof is based on the variational method and the maximum principle.
\end{abstract}

\begin{keyword}
Kazdan-Warner equation; blow-up analysis; sign-changing function; variational method; maximum principle
\MSC[2020] 46E35; 58C35; 35B33; 35B50
\end{keyword}

\end{frontmatter}

\titlecontents{section}[0mm]
                       {\vspace{.2\baselineskip}}
                       {\thecontentslabel~\hspace{.5em}}
                        {}
                        {\dotfill\contentspage[{\makebox[0pt][r]{\thecontentspage}}]}
\titlecontents{subsection}[3mm]
                       {\vspace{.2\baselineskip}}
                       {\thecontentslabel~\hspace{.5em}}
                        {}
                       {\dotfill\contentspage[{\makebox[0pt][r]{\thecontentspage}}]}

\setcounter{tocdepth}{2}


\section{Introduction}
Let $(M,g)$ be a compact Riemann surface, $h$ a smooth function on $M$. For simplicity, we assume in this paper that the area of $M$ equals one.
In 1974, Kazdan-Warner \cite{KW74} asked, under what kind of conditions on $h$, the equation
\begin{align}\label{kw-eq}
\Delta u=8\pi-8\pi he^u
\end{align}
has a solution. In literature, one calls it the Kazdan-Warner problem.  Easy to find, a necessary condition to the Kazdan-Warner problem
is $\max_Mh>0$.
Twenty three years later, Ding, Jost, Li and Wang \cite{DJLW97} attacked this problem successfully.
By using variational method and blow-up analysis, they gave a sufficient condition to the Kazdan-Warner problem.

In \cite{DJLW97}, the authors assumed $h>0$ and minimized the functional
\begin{align*}
J(u)=\f{1}{2}\int_M|\nabla u|^2dv_g+8\pi\int_M udv_g
\end{align*}
in the function space
\begin{align*}
H_1=\left\{u\in W^{1,2}(M): \int_M he^udv_g=1\right\}.
\end{align*}
Notice that critical points of $J$ in $H_1$ are solutions of equation (\ref{kw-eq}). The classical Moser-Trudinger inequality (cf. \cite{Fontana}, \cite{DJLW97})
\begin{align}\label{MT}
\log\int_M e^{u}dv_g\leq \frac{1}{16\pi}\int_M|\nabla u|^2dv_g+\int_M udv_g+C,
\end{align}
tells us that $J$ is bounded from below but not coercive. To see for what kind of $h$ the functional $J$ can achieve its minimum, they considered the perturbed functional
\begin{align}\label{pert}
J_\e(u)=\f{1}{2}\int_M|\nabla u|^2dv_g+(8\pi-\e)\int_M udv_g-(8\pi-\e)\log\int_Mhe^{u}dv_g
\end{align}
where $\e>0$. By the Moser-Trudinger inequality (\ref{MT}) one knows, $J_\e$ is coercive and achieves its minimum at some $u_\e\in H_1$ which satisfies
\begin{align}\label{kw-eq1}
\Delta u_\e=(8\pi-\e)-(8\pi-\e)he^{u_\e}.
\end{align}
If $u_\e$ is bounded in $W^{1,2}(M)$, then $u_\e$ converges to $u_0$ which minimizes $J$. Or else, $u_\e$ blows up, one has $\lambda_\e=\max_Mu_\e=u_\e(x_\e)\ra+\infty$ and
$x_\e\ra p$ as $\e\ra0$. (In this note, we do not distinguish sequence and its subsequence.) Denote $r_\e=e^{-\lam_\e/2}$. They showed the following facts.

\textbf{Fact 1.}  $u_\e(x_\e+r_\e x)-\lambda_\e\ra-2\log(1+\pi h(p)|x|^2)$ in $C_{\text{loc}}^{1}(\mathbb{R}^2)$ as $\e\ra0$.

\textbf{Fact 2.} $u_\e-\overline{u_\e}\ra G_p(x)$ weakly in $W^{1,q}(M)$ for any $1<q<2$ and in $C^2_{\text{loc}}(M\setminus\{p\})$, where $\overline{u_\e}=\int_Mu_\e dv_g$
and $G_p$ is the Green function which satisfies
\begin{align}\label{green}
\begin{cases}
\Delta G_p=8\pi-8\pi\delta_p,\\
\int_M G_p dv_g=0.
\end{cases}
\end{align}

In a normal coordinate system around $y$, one has
\begin{align}\label{ex-green}
G_y(x)=-4\log r+A_y+b_1x_1+b_2x_2+c_1x_1^2+2c_2x_1x_2+c_3x_2^2+O(r^3),
\end{align}
where $r=\text{dist}(x,y)$.

\textbf{Fact 3.} In $M\setminus B_{Rr_\e}(x_\e)$, $u_\e\geq G_{x_\e}-\lambda_\e-2\log(\pi h(p))-A_p+o_R(1)+o_\e(1)$.

We remark that Fact 3 is based on the maximum principle.

With these estimates, they proved for positive $h$ that
\begin{align}\label{bound}
\inf_{W^{1,2}(M)}J(u)\geq C_0=-8\pi-8\pi\log\pi-4\pi\max_{M}(A_p+2\log h(p)).
\end{align}
Then they constructed a sequence $\phi_\e\in W^{1,2}(M)$ such that $J(\phi_\e)<C_0$ for sufficiently small $\e>0$, provided that
\begin{align}\label{djlw-cond}
\Delta \log h(p_0)+8\pi-2K(p_0)>0,
\end{align}
where $p_0$ is the maximum point of $A_p+2\log h(p)$ on $M$ and $K$ is the Gauss curvature of $(M,g)$.
We call (\ref{djlw-cond}) as the Ding-Jost-Li-Wang condition. When $h\geq0$ and is positive somewhere,
Yang and the author \cite{YZ} proved that the blow-up can not happen on zero points of $h$, and showed the Ding-Jost-Li-Wang condition is sufficient to solve (\ref{kw-eq}).
The proof is based on a concentration lemma and some elliptic estimates.

Later, using the flow introduced by Cast\'{e}ras \cite{C1,C2}, Li and Zhu \cite{LZ} and Sun and Zhu \cite{SZ} reproved the existence result in \cite{DJLW97} and \cite{YZ} respectively,  Wang and Yang \cite{WY} showed on a symmetric Riemann surface, equations like (\ref{kw-eq}) can be solved under (\ref{djlw-cond}) when $h$ changes sign.
Recently, using another
flow, Li and Xu \cite{LX} proved that when $h$ changes sign, the Ding-Jost-Li-Wang condition is still sufficient to solve (\ref{kw-eq}).

In this paper, we would like to give a direct proof which is based on the variational method and the maximum principle to the following result.

\begin{theorem}\label{thm}
Let $(M,g)$ be a compact Riemann surface, $h$ a smooth function on $M$ which is positive somewhere. Denote $M_+=\{x\in M: h(x)>0\}$. Suppose the Ding-Jost-Li-Wang condition
(\ref{djlw-cond}) is satisfied on $M_+$. Then the functional $J$ attains its minimum in $H_1$ and consequently the Kazdan-Warner equation (\ref{kw-eq}) has a smooth solution.
\end{theorem}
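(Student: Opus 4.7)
The plan is to follow the Ding-Jost-Li-Wang perturbation-and-blow-up scheme and to isolate the only new point that the sign of $h$ forces, namely that any blow-up of the perturbed minimizers must occur inside $M_+$. Concretely, I would take a minimizer $u_\e \in H_1$ of $J_\e$ (from (\ref{pert})) solving (\ref{kw-eq1}). If $\{u_\e\}$ stays bounded in $W^{1,2}(M)$, the direct method yields a minimizer of $J$. Otherwise $\lam_\e = u_\e(x_\e) = \max_M u_\e \ra +\infty$, $x_\e \ra p \in M$, and $r_\e = e^{-\lam_\e/2} \ra 0$.

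The key claim is that $p \in M_+$. Applying the maximum principle at the maximum point $x_\e$ in (\ref{kw-eq1}) gives
\[
0 \;\leq\; -\Delta u_\e(x_\e) \;=\; (8\pi-\e)\bigl(h(x_\e)e^{\lam_\e}-1\bigr),
\]
so $h(x_\e)\geq e^{-\lam_\e}>0$ for all small $\e$; in particular $h(p)\geq 0$. To exclude $h(p)=0$ I would rescale $v_\e(x) = u_\e(x_\e+r_\e x)-\lam_\e$, which satisfies $v_\e\leq v_\e(0)=0$ and
\[
\Delta v_\e \;=\; r_\e^2(8\pi-\e) - (8\pi-\e)\,h(x_\e+r_\e x)\,e^{v_\e}.
\]
Standard elliptic estimates produce a $C^{1,\alpha}_{\text{loc}}(\mathbb{R}^2)$ limit $v$ with $\Delta v = -8\pi h(p)e^v$ and $v\leq v(0)=0$. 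If $h(p)=0$, then $v$ is a bounded harmonic function, so $v\equiv 0$ by Liouville, and hence the inner mass $\int_{|x|\leq R}h(x_\e+r_\e x)e^{v_\e}\,dx \ra 0$ as $\e\ra 0$ and $R\ra \infty$. To contradict $\int_M he^{u_\e}\,dv_g = 1$, the outer mass must also be killed; this I would do by a Fact~3-type maximum-principle comparison between $u_\e-\lam_\e$ and the Green function $G_{x_\e}$ on $M\setminus B_{Rr_\e}(x_\e)$. Combined with $\bar{u_\e}\ra -\infty$ in the blow-up regime (which follows from Fact~2 and the concentration of $he^{u_\e}$ at $p$), this forces the outer contribution to vanish, producing the desired contradiction. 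Hence $h(p)>0$, i.e.\ $p\in M_+$.

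Once $p \in M_+$ is known, Facts~1--3 apply on a neighborhood of $p$ just as in \cite{DJLW97,YZ}, and yield
\[
\inf_{W^{1,2}(M)} J \;\geq\; -8\pi - 8\pi\log\pi - 4\pi\max_{q\in M_+}\bigl(A_q + 2\log h(q)\bigr) \;=:\; C_0,
\]
the maximum being legitimately restricted to $M_+$ because only points with $h(q)>0$ can arise as blow-up points. At a maximizer $p_0\in M_+$ the hypothesis $\Delta\log h(p_0)+8\pi-2K(p_0)>0$ permits the cut-off-bubble construction of \cite{DJLW97}, producing test functions $\phi_\e\in W^{1,2}(M)$ with $J(\phi_\e)<C_0$ for small $\e$. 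This contradicts the lower bound derived from the blow-up of $u_\e$, forcing $u_\e$ to stay bounded in $W^{1,2}(M)$; then $J$ attains its minimum in $H_1$, and smoothness of the minimizer follows from standard elliptic regularity applied to (\ref{kw-eq}). The main obstacle I anticipate is the rigorous exclusion of $h(p)=0$: the role of the maximum principle in this approach is precisely to replace the concentration lemma of \cite{YZ} and the flow arguments of \cite{WY,LX} by a direct Green-function barrier comparison that is insensitive to the sign of $h$ outside the concentration ball.
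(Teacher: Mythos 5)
Your overall architecture --- perturbed minimizers $u_\e$, blow-up analysis, locating the blow-up point in $M_+$, a sharp lower bound, and the Ding--Jost--Li--Wang test functions --- is the same as the paper's, but the two places where the sign change of $h$ actually bites are exactly the places your sketch either gets wrong or passes over. The concrete failure is your proposed ``Fact~3-type maximum-principle comparison between $u_\e-\lam_\e$ and $G_{x_\e}$ on $M\setminus B_{Rr_\e}(x_\e)$'': on that domain $\Delta(u_\e-G_{x_\e}+\lam_\e-C)=-\e-(8\pi-\e)he^{u_\e}$ has no sign wherever $h<0$, so the comparison function is not superharmonic and the maximum principle simply does not apply; this is precisely the obstruction the paper is written to circumvent (see the Remark following (\ref{convergence})). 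Moreover, even where it applies, Fact~3 is a \emph{lower} bound on $u_\e$, which is the wrong direction for ``killing the outer mass''. The paper reaches $h(p)\neq0$ much more cheaply: after proving $C_1\le\int_Me^{u_\e}dv_g\le C_2$ (Lemma~\ref{int}) and that the concentration set $S$, defined through the mass of $|h|e^{u_\e}$, is the single point $\{p\}$ (Lemma~\ref{blowup-set-lemma}, via the Chen--Li distribution lemma --- a step your sketch also omits but which is needed for (\ref{int=0}) and (\ref{convergence})), the inequality $\int_{B_r(p)}|h|e^{u_\e}dv_g\ge\frac12$ is incompatible with $h(p)=0$ and $\int_Me^{u_\e}dv_g\le C_2$; the maximum principle at $x_\e$ is used only to exclude $h(p)<0$, as in your first display.

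The second, more serious omission is hidden in the phrase ``Facts~1--3 apply on a neighborhood of $p$ just as in [DJLW]''. Since Fact~3 cannot be run on $M\setminus B_{Rr_\e}(x_\e)$, the paper runs the maximum principle only on the neck $B_\d(x_\e)\setminus B_{Rr_\e}(x_\e)$, where $h>0$ (Lemma~\ref{maximum}). Shrinking the domain creates a second boundary component $\p B_\d(x_\e)$, on which one must show $u_\e\ge G_{x_\e}-\lam_\e-C$, i.e.\ $\lam_\e+\overline{u_\e}\ge-C$; this is Lemma~\ref{maxi}, proved from the identity $2J_\e(u_\e)=(8\pi-\e)\int_Mhe^{u_\e}(u_\e+\overline{u_\e})dv_g$ together with the positivity of $h$ near $p$, and it is the genuinely new estimate of the paper. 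Your sketch supplies no substitute for it, nor for the companion bound $\lam_\e+\a\overline{u_\e}\le C$ (Lemma~\ref{bd-ab}) used in the neck energy computation, so the lower bound $C_0$ is asserted rather than derived. The endgame (test functions under (\ref{djlw-cond}) on $M_+$, contradiction with the lower bound, elliptic regularity) matches the paper.
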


\begin{remark}
In \cite{SZ2}, the authors first asserted Theorem \ref{thm}. They claimed without much more details that if $u_\e$ blows up, it must happen at a single point $p$ and $h(p)>0$. Also, they said that the maximum principle does not work since $h$ changes sign. Instead, they used the capacity estimate to deal with the energy on the neck domain.
\end{remark}

The aim of this paper is two fold. First, we give the details on
the claim that if $u_\e$ blows up, it must happen at a single point $p$ and $h(p)>0$.
Second, we find that the maximum principle still works after some modified arguments, so we could still use the maximum principle to deal with the energy on the neck domain like in \cite{DJLW97}. Our method also works for the equation with singular sources, we will present this in a forthcoming paper.

We organized this paper as follows: In Sect. 2, we shall derive an explicit lower bound of $J$ in $H_1$ when $u_\e$ blows up;
In Sect. 3, we prove the main theorem. Throughout this paper, sequence and its subsequence will
not be distinguished and the constant $C$ may change from line to line.

\section{The lower bound}

In this section, we shall derive an explicit lower bound of $J$ in $H_1$ when $u_\e$ blows up,
so we assume $u_\e$ blows up throughout this section. First of all, we have
\begin{lemma}\label{int}
There exist positive constants $C_1$ and $C_2$ such that $$C_1\leq \int_M e^{u_\e}dv_g\leq C_2.$$
\end{lemma}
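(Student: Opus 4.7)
The plan is to obtain the two bounds separately, with the lower bound being essentially immediate and the upper bound requiring the minimization property of $u_\e$ together with the Moser--Trudinger inequality.

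For the lower bound, I would integrate the Euler--Lagrange equation (\ref{kw-eq1}) over $M$ to recover $\int_M h e^{u_\e}\,dv_g = 1$, and then invoke the pointwise bound $h(x) \leq \max_M h$ (which is positive by hypothesis, since $h$ is positive somewhere) to conclude $\int_M e^{u_\e}\,dv_g \geq 1/\max_M h =: C_1$.

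For the upper bound, I would first construct a fixed test function $v_0 \in W^{1,2}(M)$ with $\int_M h e^{v_0}\,dv_g > 0$; such a $v_0$ exists because $h$ is positive somewhere, e.g.\ take a smooth bump supported in $M_+$ with sufficiently large amplitude so that the positive contribution from $M_+$ dominates the bounded contribution from $M \setminus M_+$. Since $u_\e$ minimizes $J_\e$ on $W^{1,2}(M)$, this yields $J_\e(u_\e) \leq J_\e(v_0) =: C$ uniformly in small $\e$. Using the constraint $\int_M h e^{u_\e}\,dv_g = 1$ (so $\log\int_M h e^{u_\e}\,dv_g = 0$), this reduces to
$$
\f{1}{2}\int_M |\nabla u_\e|^2\,dv_g + (8\pi - \e)\,\ba{u_\e} \leq C.
$$
Plugging this into the Moser--Trudinger inequality (\ref{MT}) and rearranging gives, after elementary algebra,
$$
\log \int_M e^{u_\e}\,dv_g \leq C' + \f{\e}{8\pi}\,\ba{u_\e}.
$$
Finally, Jensen's inequality provides $\ba{u_\e} \leq \log\int_M e^{u_\e}\,dv_g$, and a short case distinction on the sign of $\ba{u_\e}$ absorbs the last term for $\e$ sufficiently small, yielding $\int_M e^{u_\e}\,dv_g \leq C_2$.

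The only mild subtlety is that $h$ changes sign, so one cannot use the crude bound $\int_M e^{u_\e}\,dv_g \leq 1/\min_M h$ available in the positive case; this forces the detour through Moser--Trudinger and a non-constant test function. Beyond that, no serious obstacle is expected.
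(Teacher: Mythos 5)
Your proposal is correct and follows essentially the same route as the paper: the lower bound from $\int_M he^{u_\e}dv_g=1$ and $h\leq\max_M h$, and the upper bound from $J_\e(u_\e)\leq J_\e(u_0)\leq C$ combined with Moser--Trudinger and Jensen to absorb the $\frac{\e}{8\pi}\overline{u_\e}$ term. The only difference is cosmetic: where the paper simply picks $u_0\in H_1$, you spell out why $H_1\neq\emptyset$ by constructing a bump function concentrated in $M_+$, which is a reasonable extra detail given that $h$ changes sign.
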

\begin{proof}
Since $u_\e\in H_1$, one has $\int_M he^{u_\e}dv_g=1$ and then
\begin{align*}
\frac{1}{\max_M h}\leq\int_M e^{u_\e}dv_g.
\end{align*}
Since $H_1\neq\emptyset$, we choose $u_0\in H_1$, then $J_\e(u_\e)=\inf_{H_1}J_\e(u)\leq J_\e(u_0)\to J(u_0)\leq C$, then the Moser-Trudinger inequality (\ref{MT}) and Jensen's inequality yield
\begin{align*}
\log\int_M e^{u_\e}dv_g&\leq\frac{1}{16\pi}\int_M|\nabla u_\e|^2dv_g+\overline{u_\e}+C\\
&=\frac{1}{8\pi}J_\e(u_\e)+\frac{\e}{8\pi}\overline{u_\e}+C\\
&\leq \frac{\e}{8\pi}\log\int_M e^{u_\e}dv_g+C.
\end{align*}
So we have $\int_M e^{u_\e}dv_g\leq C$.
\end{proof}

By Lemma \ref{int}, one knows that Lemma 2.3 in \cite{DJLW97} holds, i.e.
\begin{lemma}\label{L_q}
For any $1<q<2$, $\|\nabla u_\e\|_{q}\leq C_q$.
\end{lemma}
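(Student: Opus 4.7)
My plan is to reduce the lemma to the classical fact that on a compact surface an $L^1$ bound on $\Delta u$ yields a $W^{1,q}$ bound on $u-\overline{u}$ for every $1<q<2$, with Lemma \ref{int} supplying precisely the $L^1$ control needed in the present sign-changing setting.

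The first step is to bound $\|\Delta u_\e\|_{L^1}$ directly from the equation (\ref{kw-eq1}). By the triangle inequality,
$$\int_M |\Delta u_\e|\,dv_g \leq (8\pi-\e) + (8\pi-\e)\max_M|h| \int_M e^{u_\e}\,dv_g,$$
and Lemma \ref{int} bounds the right-hand side uniformly in $\e$. This is the only place in the whole argument where the sign of $h$ could matter, and it is absorbed harmlessly into $|h|$.

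The second step is the standard Green's function representation. Let $G(x,y)$ denote the Green's function of $-\Delta$ on $M$, normalized so that $\int_M G(x,y)\,dv_g(x)=0$. Then $v_\e := u_\e - \overline{u_\e}$ has zero mean, and
$$v_\e(x) = \int_M G(x,y)\,\Delta u_\e(y)\,dv_g(y).$$
Since $M$ is two-dimensional, $|\nabla_x G(x,y)| \leq C/\mathrm{dist}(x,y)$, which gives $\sup_{y\in M}\|\nabla_x G(\cdot,y)\|_{L^q(M)} \leq C_q$ for every $1<q<2$. Minkowski's integral inequality then yields
$$\|\nabla u_\e\|_q = \|\nabla v_\e\|_q \leq \int_M |\Delta u_\e(y)|\,\|\nabla_x G(\cdot,y)\|_{L^q}\,dv_g(y) \leq C_q\|\Delta u_\e\|_1 \leq C_q.$$

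There is no real obstacle: the argument is identical to Lemma 2.3 of \cite{DJLW97}, and the only conceptual point is that Lemma \ref{int} makes the $L^1$ control of the Laplacian go through even though $h$ changes sign. Everything else is classical elliptic potential theory on a compact surface.
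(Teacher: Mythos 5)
Your proof is correct and follows essentially the same route as the paper, which simply invokes Lemma 2.3 of \cite{DJLW97} after noting that Lemma \ref{int} supplies the needed $L^1$ control of $e^{u_\e}$ (and hence of $\Delta u_\e$) even though $h$ changes sign. You have merely written out the classical potential-theoretic argument behind that citation, and your identification of Lemma \ref{int} as the one genuinely new ingredient is exactly the point the paper is making.
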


Concerning $u_\e$ blows up, we have the following equivalent characterizations.
\begin{lemma}\label{blowup-kehua}
There holds
\begin{align*}
\lambda_\e\ra+\infty\Leftrightarrow\|\nabla u_\e\|_2\ra+\infty\Leftrightarrow\overline{u_\e}\ra-\infty.
\end{align*}
\end{lemma}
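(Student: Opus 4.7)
The plan is to deduce both equivalences from two complementary energy inequalities and then close the loop with a short elliptic bootstrap.

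First, I would collect the two basic estimates. From the minimality $J_\e(u_\e)\leq J_\e(u_0)\leq C$ (for any fixed $u_0\in H_1$) together with the constraint $\int_M he^{u_\e}\,dv_g=1$, reading off the definition of $J_\e$ immediately gives
$$\f{1}{2}\|\nabla u_\e\|_2^2+(8\pi-\e)\overline{u_\e}\leq C.\qquad(\star)$$
Complementarily, the Moser-Trudinger inequality (\ref{MT}) combined with the lower bound $\int_M e^{u_\e}\,dv_g\geq C_1>0$ from Lemma \ref{int} yields
$$\f{1}{16\pi}\|\nabla u_\e\|_2^2+\overline{u_\e}\geq -C.\qquad(\star\star)$$

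From $(\star)$, $\|\nabla u_\e\|_2\to+\infty$ forces $(8\pi-\e)\overline{u_\e}\leq C-\f{1}{2}\|\nabla u_\e\|_2^2\to -\infty$, hence $\overline{u_\e}\to-\infty$. From $(\star\star)$, rewritten as $\|\nabla u_\e\|_2^2\geq -16\pi\overline{u_\e}-C$, the converse implication is immediate. This settles the equivalence $\|\nabla u_\e\|_2\to+\infty\Leftrightarrow\overline{u_\e}\to-\infty$ by pure algebra on the two inequalities.

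For $\lam_\e\to+\infty\Leftrightarrow\|\nabla u_\e\|_2\to+\infty$, I would argue both directions by contradiction. If $\lam_\e$ stays bounded, then $u_\e\leq C$ makes $he^{u_\e}$ uniformly bounded, so by (\ref{kw-eq1}) $\|\Delta u_\e\|_\infty\leq C$; standard $W^{2,p}$ estimates applied to $u_\e-\overline{u_\e}$ (which has zero mean) bound $\|\nabla u_\e\|_2$. Conversely, if $\|\nabla u_\e\|_2$ stays bounded, then $(\star)$ and $(\star\star)$ together force $\overline{u_\e}$ to be bounded (above by $(\star)$ since $8\pi-\e>0$, below by $(\star\star)$). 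Applying Moser-Trudinger (\ref{MT}) to $pu_\e$ for arbitrary $p>1$ yields $\log\int_M e^{pu_\e}\,dv_g\leq \f{p^2}{16\pi}\|\nabla u_\e\|_2^2+p\overline{u_\e}+C\leq C_p$, so the right-hand side of (\ref{kw-eq1}) lies in $L^p$ uniformly. A $W^{2,p}$ estimate plus Sobolev embedding (taking $p>2$) then gives $\|u_\e-\overline{u_\e}\|_\infty\leq C$, whence $\lam_\e\leq\overline{u_\e}+C$ is bounded, contradicting $\lam_\e\to+\infty$.

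The main obstacle, and the only step not amounting to rearranging two inequalities, is the bootstrap turning exponential integrability into $C^0$-control on $u_\e-\overline{u_\e}$; once the lower bound of Lemma \ref{int} and the energy bound $J_\e(u_\e)\leq C$ are in hand, this is routine and does not see the sign of $h$ at all.
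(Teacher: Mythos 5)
Your proof is correct and follows essentially the same route as the paper: the equivalence $\|\nabla u_\e\|_2\to+\infty\Leftrightarrow\overline{u_\e}\to-\infty$ from the boundedness of $J_\e(u_\e)$ (your $(\star)$ and $(\star\star)$ are exactly the upper and lower bounds behind the paper's ``obvious''), and the equivalence with $\lam_\e\to+\infty$ via Moser--Trudinger plus $W^{2,p}$ elliptic estimates applied to $u_\e-\overline{u_\e}$. The only difference is cosmetic: where the paper cites Lemma 2.4 of \cite{DJLW97} for the implication $\|\nabla u_\e\|_2\to+\infty\Rightarrow\lam_\e\to+\infty$, you give a short self-contained contrapositive argument, which is a reasonable substitute.
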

\begin{proof}
Since $J_\e(u_\e)$ is bounded, the second equivalent relation is obvious. We put our attention on the first one.
If $\|\nabla u_\e\|_2\ra+\infty$,  then one has $\lambda_\e\ra+\infty$ by Lemma 2.4 in \cite{DJLW97}.
If $\|\nabla u_\e\|_2\leq C$, then $\overline{u_\e}$ is bounded. One has $e^{u_\e}$ is bounded in $L^q(M)$ for any $q\geq1$ by the Moser-Trudinger inequality.
Then by the standard elliptic estimates, we have $\|u_\e\|_{L^{\infty}(M)}$ is bounded. Therefore $\lambda_\e\leq C$. This ends the proof.
\end{proof}

\begin{remark}
We call $u_\e$ blows up, if one of the three items in Lemma \ref{blowup-kehua} holds.
\end{remark}

By Brezis-Merle's lemma (\cite{BM}, Theorem 1) and following elliptic estimates as the proof of Lemma 2.8 in \cite{DJLW97}, one has
\begin{lemma}\label{bm}
Let $\Omega\subset M$ be a domain. If for some $\gamma\in(0,1/2)$
$$\int_{\Omega}|h|e^{u_\e}dv_g<\frac12-\gamma,$$
then
$$\|u_\e-\overline{u_\e}\|_{L_{\text{loc}}^{\infty}(\Omega)}\leq C.$$
\end{lemma}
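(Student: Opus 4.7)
The plan is to argue locally on small balls inside $\Omega$, using the standard Brezis-Merle decomposition of $u_\e$ into a nonlinear piece plus a harmonic piece, in the spirit of Lemma 2.8 of \cite{DJLW97}. Fix $x_0\in\Omega$ and pick $r>0$ small enough that $B:=B_{4r}(x_0)\subset\Omega$ and $8\pi|B|<4\pi\gamma$. On $B$, I would split $u_\e=v_\e+w_\e$ with $v_\e$ the solution of the Dirichlet problem
$$\Delta v_\e=(8\pi-\e)-(8\pi-\e)he^{u_\e}\text{ in }B,\qquad v_\e=0\text{ on }\partial B,$$
so that $w_\e:=u_\e-v_\e$ is harmonic in $B$, and aim to bound each piece pointwise.

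For the nonlinear piece $v_\e$, the hypothesis combined with the smallness of $|B|$ gives
$$\int_B|\Delta v_\e|dv_g\leq(8\pi-\e)\int_B|h|e^{u_\e}dv_g+(8\pi-\e)|B|<4\pi(1-\gamma),$$
so Theorem 1 of \cite{BM} furnishes some $p>1$, independent of $\e$, with $\int_B e^{p|v_\e|}dv_g\leq C$.

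For the harmonic piece $w_\e$, Lemma \ref{L_q} together with the Poincar\'e inequality yields $\|u_\e-\overline{u_\e}\|_{L^1(M)}\leq C$; combined with the $L^1(B)$ control on $v_\e$ from the previous step this gives $\|w_\e-\overline{u_\e}\|_{L^1(B)}\leq C$. Since $w_\e-\overline{u_\e}$ is harmonic in $B$, interior estimates for harmonic functions promote this to $\|w_\e-\overline{u_\e}\|_{L^\infty(B_{2r}(x_0))}\leq C$.

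To close the loop and upgrade $v_\e$ to $L^\infty$, I would write $e^{w_\e}=e^{\overline{u_\e}}e^{w_\e-\overline{u_\e}}$; Jensen's inequality together with Lemma \ref{int} forces $e^{\overline{u_\e}}\leq C$, so $\|e^{w_\e}\|_{L^\infty(B_{2r}(x_0))}\leq C$ and hence $\|he^{u_\e}\|_{L^p(B_{2r}(x_0))}\leq C\|e^{v_\e}\|_{L^p(B_{2r}(x_0))}\leq C$. Standard interior $W^{2,p}$ estimates applied to the equation for $v_\e$, combined with the two-dimensional Sobolev embedding $W^{2,p}\hookrightarrow C^0$ valid for $p>1$, then deliver $\|v_\e\|_{L^\infty(B_r(x_0))}\leq C$. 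Adding the two pieces and covering an arbitrary compact subset of $\Omega$ by finitely many such balls produces the desired $L^\infty_{\text{loc}}(\Omega)$ bound. The only delicate point is the Brezis-Merle input: the factor $1/2-\gamma$ in the hypothesis is precisely what keeps $\|\Delta v_\e\|_{L^1(B)}$ strictly below $4\pi$ once $|B|$ is sufficiently small, and the remainder is a textbook elliptic bootstrap.
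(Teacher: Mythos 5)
Your proof is correct and is essentially the argument the paper invokes: the paper gives no details, simply citing Brezis--Merle's Theorem 1 together with the elliptic estimates of Lemma 2.8 in \cite{DJLW97}, and your decomposition $u_\e=v_\e+w_\e$ with the smallness of $\|\Delta v_\e\|_{L^1(B)}$ below $4\pi$, the $L^1\to L^\infty$ harmonic estimate, and the $W^{2,p}$ bootstrap is exactly that standard argument written out. The only (minor, standard) point left implicit is transferring Brezis--Merle from the flat Laplacian to $\Delta_g$ in a small coordinate ball.
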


Due to Lemma \ref{bm}, we define the blowup set of $u_\e$ as
\begin{align}\label{blowup-set}
S=\{x\in M:~\lim_{r\ra0}\lim_{\e\ra0}\int_{B_r(x)}|h|e^{u_\e}dv_g\geq\frac{1}{2}\}.
\end{align}
The following lemma is very important to us. It asserts that $S$ is a single point set.
\begin{lemma}\label{blowup-set-lemma}
$S=\{p\}$.
\end{lemma}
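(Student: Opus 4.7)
The plan is to prove $S=\{p\}$ in three steps: show $S$ is finite and contains $p$; rule out blowup where $h\leq 0$ by the maximum principle; and eliminate multiple blowup points by a local bubble-mass lower bound. For the first step, Lemma \ref{int} combined with the boundedness of $h$ gives $\int_M|h|e^{u_\e}dv_g\leq \max_M|h|\cdot C_2$, so the definition of $S$ forces $S$ to be finite. If $p\notin S$, pick $r>0$ and $\gamma\in(0,1/2)$ with $\int_{B_r(p)}|h|e^{u_\e}dv_g<1/2-\gamma$ for small $\e$; Lemma \ref{bm} then gives $\|u_\e-\overline{u_\e}\|_{L^\infty(B_{r/2}(p))}\leq C$. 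Since $x_\e\to p$, this would force $\lambda_\e\leq \overline{u_\e}+C$, contradicting Lemma \ref{blowup-kehua}. Hence $p\in S$.

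For the second step, suppose some $q\in S$ satisfies $h(q)\leq 0$. By continuity of $h$ and finiteness of $S$, choose $r>0$ so small that $h\leq 0$ on $\overline{B_r(q)}$ and $\overline{B_r(q)}\cap S=\{q\}$. Equation (\ref{kw-eq1}) yields
\begin{equation*}
\Delta u_\e=(8\pi-\e)(1-he^{u_\e})\geq 8\pi-\e>0\quad\text{on }B_r(q),
\end{equation*}
so $u_\e$ is strictly subharmonic there. Covering the compact set $\partial B_r(q)\subset M\setminus S$ by finitely many balls where Lemma \ref{bm} applies gives $u_\e\leq\overline{u_\e}+C$ on $\partial B_r(q)$, and the maximum principle propagates this bound to all of $\overline{B_r(q)}$. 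Hence $e^{u_\e}\leq e^{\overline{u_\e}+C}\to 0$ uniformly on $B_r(q)$, so $\int_{B_r(q)}|h|e^{u_\e}dv_g\to 0$, contradicting $q\in S$. Therefore every $q\in S$ satisfies $h(q)>0$.

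For the third step, fix $q\in S$ and choose $r>0$ so small that $h>0$ on $\overline{B_r(q)}$ and $\overline{B_r(q)}\cap S=\{q\}$. Applying Lemma \ref{bm} on an annulus $B_r(q)\setminus B_{r'}(q)$ shows that any sequence of maxima $y_\e\in\overline{B_r(q)}$ of $u_\e$ satisfies $y_\e\to q$ and $u_\e(y_\e)\to+\infty$. Setting $\rho_\e=e^{-u_\e(y_\e)/2}$ and $v_\e(x)=u_\e(y_\e+\rho_\e x)-u_\e(y_\e)$, the same rescaling as in Fact 1 gives $v_\e\to V(x)=-2\log(1+\pi h(q)|x|^2)$ in $C^2_{\text{loc}}(\mathbb{R}^2)$. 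Since $h>0$ on $B_r(q)$, $he^{u_\e}\geq 0$ there, so for any fixed $R>0$
\begin{equation*}
\liminf_{\e\to 0}\int_{B_r(q)}he^{u_\e}dv_g\geq \lim_{\e\to 0}\int_{B_{R\rho_\e}(y_\e)}he^{u_\e}dv_g=\int_{B_R(0)}h(q)e^{V(x)}dx,
\end{equation*}
and letting $R\to\infty$ gives $\liminf_{\e\to 0}\int_{B_r(q)}he^{u_\e}dv_g\geq\int_{\mathbb{R}^2}h(q)e^Vdx=1$. On $M\setminus\bigcup_{q\in S}B_r(q)$, Lemma \ref{bm} and $\overline{u_\e}\to-\infty$ force $e^{u_\e}\to 0$ uniformly, so $\int_M he^{u_\e}dv_g=\sum_{q\in S}\int_{B_r(q)}he^{u_\e}dv_g+o(1)$. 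Combining this with $\int_M he^{u_\e}dv_g=1$ and the lower bound $1$ at each blowup point via Fatou's lemma for finite sums yields $|S|\leq 1$, and with $p\in S$ we conclude $S=\{p\}$.

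The main obstacle I expect is repeating the bubble analysis of Fact 1 at an arbitrary blowup point rather than at the global maximum $x_\e$. One must verify $C^2_{\text{loc}}$ convergence of $v_\e$ to an entire solution of the Liouville equation on all of $\mathbb{R}^2$ and invoke Chen-Li's classification to identify $V$; this is a local version of the DJLW concentration argument, made available by $h(q)>0$ from Step 2 together with the upper bound $v_\e\leq 0$ built into the choice of $y_\e$ as a local maximum.
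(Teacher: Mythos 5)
Your proposal is essentially correct in architecture but contains one genuine gap, in Step 2. From $h(q)\le 0$ and continuity you cannot conclude $h\le 0$ on a neighbourhood of $q$ when $h(q)=0$ (take $h$ vanishing at $q$ but positive arbitrarily nearby), so the inequality $\Delta u_\e\ge 8\pi-\e$ on $B_r(q)$ is unjustified in that case and your subharmonicity argument does not start. The case $h(q)=0$ must be excluded differently, and the paper's Lemma \ref{h-positive} does exactly this: if $h(q)=0$ then $\sup_{B_r(q)}|h|\to0$ as $r\to0$, so by Lemma \ref{int} one has $\int_{B_r(q)}|h|e^{u_\e}dv_g\le \sup_{B_r(q)}|h|\cdot C_2\to0$, hence $q\notin S$ directly from the definition (\ref{blowup-set}). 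With that one line inserted, your maximum-principle argument correctly rules out $h(q)<0$; note the paper handles $h(p)<0$ even more cheaply by evaluating $\Delta u_\e(x_\e)>0$ at the global maximum point, though that trick is only available at $p$, whereas your version works at every point of $S$ (which your Step 3 requires).

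Beyond that, your route to $\sharp S=1$ is genuinely different from the paper's. The paper invokes the Chen--Li distribution lemma: if a fixed positive fraction of $\int_Me^{u_\e}dv_g$ concentrates in each of two disjoint balls, the Moser--Trudinger constant improves from $1/(16\pi)$ to $1/(32\pi)+\e'$, and the improved inequality contradicts the boundedness of $J_\e(u_\e)$; no bubbling and no sign information on $h$ away from $p$ are needed. You instead prove local mass quantization, redoing the DJLW rescaling at a local maximum near each $q\in S$ (this is precisely why you must first establish $h>0$ on all of $S$) to show each blowup point absorbs mass at least $1$ of $he^{u_\e}$, incompatible with total mass $1$. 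This is a legitimate and standard alternative that yields strictly more information, at the cost of carrying out the full bubble classification at every point of $S$; the technical point you flag (upper bound $v_\e\le0$ from the choice of local maximum, uniform local $L^1$ bound on $e^{v_\e}$ from Lemma \ref{int}, elliptic estimates, and the Chen--Li classification) is indeed what must be checked and goes through since $h(q)>0$. Your Step 1 ($p\in S$ via Lemma \ref{bm} and $\lambda_\e\to+\infty$) coincides with the paper's third part and subsumes its separate verification that $S\neq\emptyset$.
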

\begin{proof}
We divide the proof into three parts.

1. $S\neq\emptyset$.

If not, $\forall x\in M$, $\exists\delta=\delta_x\in(0,1/2)$, $\exists r=r_x\in(0,\text{inj}(M))$ such that
\begin{align*}
\int_{B_{r_x}(x)}|h|e^{u_\e}dv_g<\frac12-\d.
\end{align*}
Then Lemma \ref{bm} tells us that
\begin{align*}
\|u_\e-\overline{u_\e}\|_{L^\infty(B_{r_x/2}(x))}\leq C.
\end{align*}
Combining with a finite covering argument, we arrive at
\begin{align*}
\|u_\e-\overline{u_\e}\|_{L^\infty(M)}\leq C.
\end{align*}
Multiplying equation (\ref{kw-eq1}) with $u_\e-\overline{u_\e}$ and integrating on $M$, integrating by parts
\begin{align*}
\int_M|\nabla u_\e|^2dv_g\leq C\|h\|_{L^\infty(M)}\int_{M}e^{u_\e}dv_g\leq C,
\end{align*}
where in the last inequality we have used Lemma \ref{int}. This contradiction with $\|\na u_\e\|_2\ra+\infty$ tells us that $S$ is not empty.

2. $\sharp S=1$.

If not, since $S\neq\emptyset$, we have $\sharp S\geq2$. Choosing $x_1\neq x_2\in S$, one has for sufficiently small $r$ that
\begin{align*}
\frac{\int_{B_r(x_1)}e^{u_\e}dv_g}{\int_Me^{u_\e}dv_g}\geq\frac{1}{2C_2|h|_{\text{max}}},~~
\frac{\int_{B_r(x_2)}e^{u_\e}dv_g}{\int_Me^{u_\e}dv_g}\geq\frac{1}{2C_2|h|_{\text{max}}}.
\end{align*}
Then by the distribution lemma of Chen-Li (\cite{CL2}, Theorem 2.1), we obtain
\begin{align*}
\log\int_M e^{u_\e}dv_g\leq (\frac{1}{32\pi}+\e')\int_{M}|\nabla u_\e|^2dv_g+\overline{u_\e}+C.
\end{align*}
Hence
\begin{align*}
C\geq J_\e(u_\e)&=\frac12\int_{M}|\nabla u_\e|^2 dv_g+(8\pi-\e)\overline{u_\e}-(8\pi-\e)\log\int_{M}he^{u_\e}dv_g\nonumber\\
&\geq \frac12\int_{M}|\nabla u_\e|^2dv_g-(8\pi-\e)\log h_{\text{max}}-(8\pi-\e)(\frac{1}{32\pi}+\e')\int_{M}|\nabla u_\e|^2dv_g\nonumber\\
&\geq\frac16\int_M |\nabla u_\e|^2dv_g-C.
\end{align*}
It follows $\int_M |\nabla u_\e|^2dv_g\leq C$, this contradiction tells us that $\sharp S=1$.

3. $S=\{p\}$.

Recall that $u_\e(x_\e)=\max_{M}u_\e$, and $x_\e\ra p\in M$ as $\e\ra0$.

If $p\notin S$, then $\exists\delta_p>0$ and $r_p\in(0,\text{inj}(M))$ such that
 $$\int_{B_{r_p}(p)}|h|e^{u_\e}dv_g<\frac12-\delta_p.$$
Then by Lemma \ref{bm}, one has
\begin{align*}
\|u_\e-\overline{u_\e}\|_{L^{\infty}(B_{r_p/2}(p))}\leq C.
\end{align*}
Hence
\begin{align*}
u_\e(x_\e)\leq C+\overline{u_\e}\leq C,
\end{align*}
where in the last inequality we have used Jensen's inequality and Lemma \ref{int}.
On the other hand, for $x\in S$ and small $r$
\begin{align*}
u_\e(x_\e)=\max_M u_\e\geq\max_{B_r(x)}u_\e\ra+\infty~~\text{as}~~\e\ra0.
\end{align*}
This contradiction tells us that $p\in S$, concluding we know $S=\{p\}$.
\end{proof}

Still, we can show that at the blow-up point, $h$ is positive. Namely,

\begin{lemma}\label{h-positive}
$h(p)>0$.
\end{lemma}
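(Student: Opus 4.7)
The plan is to argue by contradiction, assuming $h(p)\le 0$. The idea is that, since $S=\{p\}$ by Lemma \ref{blowup-set-lemma}, outside any neighborhood of $p$ we have $u_\e\to -\infty$ uniformly, forcing all the mass of the normalizing integral $\int_M he^{u_\e}dv_g=1$ to concentrate at $p$; but if $h(p)\le 0$, continuity would make $h$ too small near $p$ to carry a positive mass of $1$.

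First I would show that for any fixed $r>0$, $u_\e\to -\infty$ uniformly on $M\setminus B_r(p)$. Since $p$ is the only point of $S$, for each $x\in M\setminus\{p\}$ the definition (\ref{blowup-set}) provides $r_x,\delta_x>0$ with $\int_{B_{r_x}(x)}|h|e^{u_\e}dv_g<1/2-\delta_x$ for all sufficiently small $\e$. Lemma \ref{bm} then yields $\|u_\e-\overline{u_\e}\|_{L^{\infty}(B_{r_x/2}(x))}\le C_x$. Covering the compact set $M\setminus B_r(p)$ by finitely many balls $B_{r_{x_i}/2}(x_i)$ gives $\|u_\e-\overline{u_\e}\|_{L^{\infty}(M\setminus B_r(p))}\le C$, and since $\overline{u_\e}\to-\infty$ by Lemma \ref{blowup-kehua}, we conclude $u_\e\to-\infty$ uniformly on $M\setminus B_r(p)$. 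Because $h$ is bounded on $M$, this uniform decay forces $\int_{M\setminus B_r(p)}he^{u_\e}dv_g\to 0$, and combined with $\int_M he^{u_\e}dv_g=1$ we obtain $\int_{B_r(p)}he^{u_\e}dv_g\to 1$ for every fixed small $r>0$.

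Finally, if $h(p)\le 0$, continuity of $h$ lets me choose, for any $\eta>0$, a radius $r>0$ with $h(x)\le \eta$ on $B_r(p)$. Combining with the upper bound of Lemma \ref{int},
\begin{equation*}
\int_{B_r(p)}he^{u_\e}dv_g \le \eta\int_{B_r(p)}e^{u_\e}dv_g \le \eta C_2,
\end{equation*}
so passing first $\e\to 0$ and then $\eta\to 0$ contradicts the conclusion of the previous paragraph. Hence $h(p)>0$. There is essentially no analytic obstacle once Lemmas \ref{int}--\ref{blowup-set-lemma} are in hand; the only delicate point is the ordering of the limits, so that the covering constant $C$ in the second step is fixed before $\eta$ (equivalently $r$) is sent to zero in the third step.
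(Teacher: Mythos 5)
Your proof is correct, but it takes a genuinely different route from the paper's. The paper argues in two short steps: it rules out $h(p)=0$ directly from the definition of the blow-up set $S$ together with Lemma \ref{int} (if $h(p)=0$ then $\int_{B_r(p)}|h|e^{u_\e}dv_g\le\sup_{B_r(p)}|h|\cdot C_2$ can be made smaller than $\tfrac12$, contradicting $p\in S$), and it rules out $h(p)<0$ by the maximum principle in its pointwise form: at the interior maximum $x_\e$ one must have $\Delta u_\e(x_\e)\le 0$, whereas the equation gives $\Delta u_\e(x_\e)=(8\pi-\e)\bigl(1-h(x_\e)e^{u_\e(x_\e)}\bigr)>0$ if $h(x_\e)<0$. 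You instead treat both cases $h(p)\le 0$ at once by showing that the constraint measure $he^{u_\e}dv_g$ concentrates all of its unit mass at $p$ and that this is incompatible with $h\le\eta$ near $p$. Your intermediate claims are sound and non-circular: the uniform bound $\|u_\e-\overline{u_\e}\|_{L^\infty(M\setminus B_r(p))}\le C$ via the covering argument and Lemma \ref{bm}, and the consequent statements $\int_{M\setminus B_r(p)}he^{u_\e}dv_g\to0$ and $\int_{B_r(p)}he^{u_\e}dv_g\to1$, are exactly what the paper itself establishes right after this lemma (see (\ref{int=0}) and (\ref{maxi-4})), and they use only $S=\{p\}$, Lemma \ref{bm} and Lemma \ref{blowup-kehua}, not the positivity of $h(p)$. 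Your approach buys robustness (it never evaluates the equation pointwise at $x_\e$, only the integral constraint $\int_M he^{u_\e}dv_g=1$), at the cost of being longer; the paper's second-derivative test is a one-line local argument, and indeed the paper advertises the use of the maximum principle as a point of the note. Your remark about fixing the covering constant before sending $\eta\to0$ is the right thing to watch, and you handle it correctly.
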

\begin{proof}
By the definition of $S$ and Lemma \ref{int}, one has $h(p)\neq0$. Now we just need to exclude $h(p)$ is negative. We do this by the maximum principle.
If $h(p)<0$, then $h(x_\e)<0$ for sufficiently small $\e$. Therefore, at $x_\e$
\begin{align*}
\Delta u_\e(x_\e)=(8\pi-\e)(1-h(x_\e)e^{u_\e(x_\e)})>0.
\end{align*}
This contradicts with the fact that $u_\e$ attains its maximum at $x_\e$. So we have $h(p)>0$.
\end{proof}

We choose a local normal coordinate system around $p$, denote $r_\e=e^{-\lambda_\e/2}$ and define
\begin{align*}
\varphi_\e(x)=u_\e(x_\e+r_\e x)-\lambda_\e.
\end{align*}
Since $h(p)>0$, the proof of Lemma 2.5 in \cite{DJLW97} tells us that
\begin{align}\label{bubble}
\varphi_\e(x)\ra\varphi_0(x)=-2\log(1+\pi h(p)|x|^2)~~\text{in}~~C_{\text{loc}}^1(\mathbb{R}^2)~~\text{as}~~\e\ra0.
\end{align}
Calculating directly, one has
\begin{align}\label{int=1}
\lim_{R\ra+\infty}\lim_{\e\ra0}\int_{B_{Rr_\e}(x_\e)}he^{u_\e}dv_g=\int_{\mathbb{R}^2}h(p)e^{\varphi_0}dx=1.
\end{align}

Since $S=\{p\}$, for any $x\in M\setminus\{p\}$, there exists a $\gamma_x\in(0,1/2)$ and a small $r_x\in(0,\text{dist}(x,p)/2)$ such that
$$\int_{B_{r_x}(x)}|h|e^{u_\e}dv_g<\frac12-\gamma_x.$$
By Lemma \ref{bm}, $\|u_\e-\overline{u_\e}\|_{L^{\infty}_{B_{r_x/2}(x)}}\leq C$, then by Lemma \ref{blowup-kehua} we have
$u_\e(x)\leq C+\overline{u_\e}\ra-\infty$ as $\e\ra0$. So for any $\Omega\subset\subset M\setminus\{p\}$, there holds
\begin{align}\label{int=0}
\int_{\Omega}|h|e^{u_\e}dv_g\ra0~~\text{as}~~\e\ra0.
\end{align}

By (\ref{int=1}) and (\ref{int=0}) we can see that, $he^{u_\e}$ converges to $\delta_p$ in the sense of measure. Therefore $u_\e-\overline{u_\e}\ra G_p(x)$ weakly in $W^{1,q}(M)$ for any $1<q<2$, where $G_p$ is the Green function satisfying (\ref{green}), since $G_p$ is the only solution of (\ref{green}) in $W^{1,q}(M)$. Lemma \ref{bm}
and (\ref{int=0}) yield that for any $\Omega\subset\subset M\setminus\{p\}$,
\begin{align*}
\|u_\e-\overline{u_\e}\|_{L^{\infty}(\Omega)}\leq C.
\end{align*}
This inequality together with the standard elliptic estimates yields that
\begin{align}\label{convergence}
u_\e-\overline{u_\e}\ra G_p~~\text{in}~~C_{\text{loc}}^{2}(M\setminus\{p\})~~\text{as}~~\e\ra0.
\end{align}

\begin{remark}
Even with a lot of effort, (\ref{bubble}) and (\ref{convergence}) show Fact 1 and Fact 2 in the introduction are still right when $h$ changes sign.
However, Fact 3 in the introduction is out of reach since $h$ changes sign and the maximum principle does not work on the whole $M\setminus B_{Rr_\e}(x_\e)$.
\end{remark}

However, since $h(p)>0$, we can still use the maximum principle on a small neck neighborhood of $x_\e$ to build the estimate like Fact 3. Fortunately, it is enough to estimate the energy on the neck domain.

First, we need to show
\begin{lemma}\label{maxi}
There exists a constant $C_3$ such that
\begin{align*}
\lambda_\e+\overline{u_\e}\geq -C_3.
\end{align*}
\end{lemma}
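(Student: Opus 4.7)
The plan is to test the equation (\ref{kw-eq1}) against the Green function $G_{x_\e}$ from (\ref{green}) and exploit the concentration of $he^{u_\e}$ at $p$. Using $\int_M G_{x_\e}\,dv_g = 0$, $\Delta G_{x_\e} = 8\pi - 8\pi\delta_{x_\e}$, and integration by parts,
\begin{equation*}
8\pi(\overline{u_\e} - \lambda_\e) = \int_M u_\e\,\Delta G_{x_\e}\,dv_g = \int_M G_{x_\e}\Delta u_\e\,dv_g = -(8\pi-\e)\int_M G_{x_\e}he^{u_\e}\,dv_g.
\end{equation*}
Setting $I_\e = \int_M G_{x_\e}he^{u_\e}\,dv_g$, this rearranges as $\overline{u_\e}+\lambda_\e = 2\lambda_\e - \tfrac{8\pi-\e}{8\pi}I_\e$, so the lemma reduces to an upper bound of the form $I_\e \leq 2\lambda_\e + C + C\lambda_\e e^{\overline{u_\e}}$.

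Fix $\delta>0$ small enough that $h>0$ on $B_\delta(x_\e)$ for all small $\e$, which is possible by Lemma \ref{h-positive} and $x_\e\to p$. On $M\setminus B_\delta(x_\e)$, $G_{x_\e}$ is uniformly bounded and $|h|e^{u_\e} = O(e^{\overline{u_\e}})$ by (\ref{convergence}), so this part contributes $o(1)$; the same estimate gives $\int_{B_\delta(x_\e)}he^{u_\e} = 1 + O(e^{\overline{u_\e}})$. On $B_\delta(x_\e)$ the expansion (\ref{ex-green}) together with $-4\log|x-x_\e| = 2\lambda_\e - 4\log(|x-x_\e|/r_\e)$ yields
\begin{equation*}
G_{x_\e}(x) = 2\lambda_\e + A_{x_\e} - 4\log\frac{|x-x_\e|}{r_\e} + O(|x-x_\e|).
\end{equation*}
The $2\lambda_\e$ part integrates to $2\lambda_\e + O(\lambda_\e e^{\overline{u_\e}})$, the $A_{x_\e}$ and $O(|x-x_\e|)$ parts integrate to $O(1)$, and for the logarithmic part I use that $he^{u_\e}\geq 0$ on $B_\delta(x_\e)$ to discard the region $|x-x_\e|>r_\e$ where the integrand is nonpositive; rescaling $y=(x-x_\e)/r_\e$, using $r_\e^2 e^{\lambda_\e}=1$ and $\varphi_\e\leq 0$ (from $\lambda_\e=\max_M u_\e$),
\begin{equation*}
-4\int_{B_\delta(x_\e)}\log\frac{|x-x_\e|}{r_\e}\,he^{u_\e}\,dv_g \leq 4\int_{|y|<1}(-\log|y|)\,h(x_\e+r_\e y)\,e^{\varphi_\e(y)}\,dy \leq C.
\end{equation*}

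Substituting the resulting bound $I_\e \leq 2\lambda_\e + C + C\lambda_\e e^{\overline{u_\e}}$ and using $\e\lambda_\e\geq 0$ gives $\overline{u_\e}+\lambda_\e \geq -C - C\lambda_\e e^{\overline{u_\e}}$. The main obstacle is that $\lambda_\e e^{\overline{u_\e}}$ is not a priori bounded, and I close the argument by a bootstrap. If $\overline{u_\e}+\lambda_\e\to-\infty$ along a subsequence, then $\overline{u_\e}\leq -\lambda_\e - M_\e$ for some $M_\e\to+\infty$, whence $\log\lambda_\e+\overline{u_\e}\leq \log\lambda_\e - \lambda_\e - M_\e \to -\infty$, so $\lambda_\e e^{\overline{u_\e}}\to 0$; the displayed inequality then forces $\overline{u_\e}+\lambda_\e\geq -2C$ eventually, contradicting the assumption. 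Hence $\lambda_\e+\overline{u_\e}\geq -C_3$.
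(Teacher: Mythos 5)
Your proof is correct, but it follows a genuinely different route from the paper's. The paper multiplies (\ref{kw-eq1}) by $u_\e$ and integrates to get the identity $2J_\e(u_\e)=(8\pi-\e)\int_M he^{u_\e}(u_\e+\overline{u_\e})dv_g$; it then splits the integral into $M\setminus B_\delta(p)$ (which is $o_\e(1)$ by Lemma \ref{bm}) and $B_\delta(p)$, where $h>0$ allows the crude pointwise bound $u_\e\leq\lambda_\e$, so that $2J_\e(u_\e)\leq(\lambda_\e+\overline{u_\e})(1+o_\e(1))+o_\e(1)$ and the boundedness of $J_\e(u_\e)$ finishes the argument in a few lines. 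You instead test the equation against $G_{x_\e}$, obtaining the exact identity $8\pi(\overline{u_\e}-\lambda_\e)=-(8\pi-\e)\int_M G_{x_\e}he^{u_\e}dv_g$, and then bound the Green-function integral from above using the expansion (\ref{ex-green}), the sign of $h$ on $B_\delta(x_\e)$ to discard the annulus where $-\log(|x-x_\e|/r_\e)\leq0$, and the rescaled bubble with $\varphi_\e\leq0$; a short bootstrap disposes of the a priori unbounded error term $\lambda_\e e^{\overline{u_\e}}$. Both arguments hinge on Lemma \ref{h-positive} (positivity of $h$ near the blow-up point), but yours requires more input (the bubble convergence (\ref{bubble}) and the Green representation), whereas the paper's needs only the energy bound and Lemma \ref{bm}. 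In compensation, your identity is sharper in spirit: it is the standard Green-function (Pohozaev-type) test that, pushed slightly further, yields the two-sided asymptotics $\overline{u_\e}=-\lambda_\e+O(1)$ rather than just the one-sided bound, and it anticipates the computations (\ref{neck-33})--(\ref{neck-36}) carried out later in Section 2. All the individual steps you use (uniform boundedness of $A_{x_\e}$, the estimate $\int_{B_\delta(x_\e)}he^{u_\e}dv_g=1+o_\e(1)$, integrability of $\log|y|$ on the unit disk) are justified by results already available at this point in the paper, so the argument stands.
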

\begin{proof}
Multiplying (\ref{kw-eq1}) with $u_\e$ and integrating on $M$, one has
\begin{align}\label{maxi-1}
-\int_M |\nabla u_\e|^2dv_g = (8\pi-\e)\overline{u_\e}- (8\pi-\e)\int_{M}he^{u_\e}u_\e dv_g
\end{align}
Since $u_\e\in H_1$, one has by (\ref{maxi-1}) that
\begin{align}\label{maxi-2}
2J_\e(u_\e)=&(8\pi-\e)\int_Mhe^{u_\e}(u_\e+\overline{u_\e})dv_g.
\end{align}
It follows from Lemma \ref{bm} that $\|u_\e-\overline{u_\e}\|_{L^{\infty}(M\setminus B_\delta(p))}\leq C$, from Lemma \ref{blowup-kehua} that $\overline{u_\e}\ra-\infty$, so
\begin{align}\label{maxi-3}
&\int_{M\setminus B_\delta(p)}he^{u_\e}(u_\e+\overline{u_\e})dv_g\nonumber\\
=&\int_{M\setminus B_{\delta}(p)}he^{u_\e-\overline{u_\e}}e^{\overline{u_\e}}(u_\e-\overline{u_\e})dv_g
  +2\int_{M\setminus B_{\delta}(p)}he^{u_\e-\overline{u_\e}}e^{\overline{u_\e}}\overline{u_\e} dv_g\nonumber\\
=&o_\e(1)~~\text{as}~~\e\ra0.
\end{align}
By (\ref{int=0}),
$$\int_{M\setminus B_\delta(p)}he^{u_\e}dv_g=o_\e(1)~~\text{as}~~\e\ra0.$$
This together with $u_\e\in H_1$ tells us that
\begin{align}\label{maxi-4}
\int_{B_\delta(p)}he^{u_\e}dv_g=1+o_\e(1)~~\text{as}~~\e\ra0.
\end{align}
  Choose $\delta>0$ small such that $h>0$ in $B_\delta(p)$, one obtains by (\ref{maxi-4}) that
\begin{align}\label{maxi-5}
\int_{B_\delta(p)}he^{u_\e}(u_\e+\overline{u_\e})dv_g\leq (\lambda_\e+\overline{u_\e})\int_{B_\delta(p)}he^{u_\e}=(\lambda_\e+\overline{u_\e})(1+o_\e(1))~~\text{as}~~\e\ra0.
\end{align}
Taking (\ref{maxi-3}) and (\ref{maxi-5}) into (\ref{maxi-2}), we complete the proof by noticing $J_\e(u_\e)$ is bounded.
\end{proof}

In the following, we let $\delta>0$ small enough such that $h>0$  and (\ref{ex-green}) holds for $G_{x_\e}$ in $B_{\delta}(x_\e)$.
Recall that $r_\e=e^{-\lambda_\e/2}$ and $R>0$. Due to the maximum principle, we have the following estimate on the neck domain.
\begin{lemma}\label{maximum}
There exists a constant $C_4$, such that in $B_\delta(x_\e)\setminus B_{Rr_\e}(x_\e)$
\begin{align*}
u_\e\geq G_{x_\e}-\lambda_\e-C_4+o_\e(1)+o_R(1).
\end{align*}
\end{lemma}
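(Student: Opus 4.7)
The plan is to apply the maximum principle on the annulus $\Omega_\e:=B_\delta(x_\e)\setminus\ba B_{Rr_\e}(x_\e)$, where by choice of $\delta$ we have $h>0$. Set
\[
w_\e(x):=u_\e(x)-G_{x_\e}(x)+\lambda_\e+C_4,
\]
where $C_4$ is a constant to be chosen at the end. Using (\ref{kw-eq1}) and (\ref{green}), a direct computation gives
\[
\Delta w_\e=-\e-(8\pi-\e)h\,e^{u_\e}<0\quad\text{in }\Omega_\e,
\]
because $h>0$ there. Thus $w_\e$ is superharmonic and its infimum on $\ba\Omega_\e$ is attained on $\pa\Omega_\e$. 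It therefore suffices to bound $w_\e$ from below on the two boundary circles.

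On the outer boundary $\pa B_\delta(x_\e)$, the convergence (\ref{convergence}) together with $G_{x_\e}\to G_p$ in $C^2$ on a neighborhood of $\pa B_\delta(p)$ (for $\e$ small) gives $u_\e=G_{x_\e}+\overline{u_\e}+o_\e(1)$ there, whence by Lemma \ref{maxi}
\[
w_\e\big|_{\pa B_\delta(x_\e)}=\overline{u_\e}+\lambda_\e+C_4+o_\e(1)\geq C_4-C_3+o_\e(1).
\]
On the inner boundary $\pa B_{Rr_\e}(x_\e)$, the bubble behavior (\ref{bubble}) yields, for $|x-x_\e|=Rr_\e$,
\[
u_\e=\lambda_\e+\varphi_\e\!\left(\tfrac{x-x_\e}{r_\e}\right)=\lambda_\e-2\log\bigl(1+\pi h(p)R^2\bigr)+o_\e(1),
\]
so that $u_\e=\lambda_\e-4\log R-2\log(\pi h(p))+o_R(1)+o_\e(1)$. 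On the other hand, plugging $r=Rr_\e$ into (\ref{ex-green}) together with $-4\log r_\e=2\lambda_\e$ and $A_{x_\e}\to A_p$ gives
\[
G_{x_\e}=-4\log R+2\lambda_\e+A_p+o_\e(1).
\]
Subtracting,
\[
w_\e\big|_{\pa B_{Rr_\e}(x_\e)}=C_4-A_p-2\log(\pi h(p))+o_R(1)+o_\e(1).
\]

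Choosing $C_4$ large enough (any $C_4\ge\max\{C_3,\,A_p+2\log(\pi h(p))\}$ works) makes both boundary values $\ge o_R(1)+o_\e(1)$. The maximum principle then propagates this bound throughout $\Omega_\e$, yielding the desired estimate. The delicate point is not the maximum principle itself but the matching of asymptotics on the two boundaries: on the outer one one must use Lemma \ref{maxi} (which in turn relied on $h>0$ near $p$), and on the inner one one needs both the precise bubble profile (\ref{bubble}) and the Green function expansion (\ref{ex-green}) to see that the $-4\log R$ terms cancel, so that $C_4$ can be chosen independently of $R$ and $\e$.
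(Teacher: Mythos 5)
Your proof is correct and follows essentially the same route as the paper: the same superharmonicity computation $\Delta(u_\e-G_{x_\e}+\lambda_\e+C_4)=-\e-(8\pi-\e)he^{u_\e}<0$ on the annulus where $h>0$, the same outer-boundary estimate via (\ref{convergence}) and Lemma \ref{maxi}, the same inner-boundary matching of (\ref{bubble}) against (\ref{ex-green}) (you merely expand $2\log\frac{1+\pi h(p)R^2}{R^2}=2\log(\pi h(p))+o_R(1)$ explicitly, where the paper keeps the exact expression and absorbs it into $C_4$ for $R$ large), and the same conclusion by the maximum principle.
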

\begin{proof}
It is clear that for any constant $C$
\begin{align}\label{maximum-0}
\Delta(u_\e-G_{x_\e}+\lambda_\e-C)=-\e-(8\pi-\e)he^{u_\e}<0~~\text{in}~~B_\delta(x_\e)\setminus B_{Rr_\e}(x_\e).
\end{align}
By (\ref{bubble}) and (\ref{ex-green}) one has on $\p B_{Rr_\e}(x_\e)$ that
\begin{align*}
u_\e&=\lambda_\e-2\log(1+\pi h(p)R^2)+o_\e(1),\\
G_{x_\e}&=-4\log(Rr_\e)+A_{x_\e}+o_\e(1).
\end{align*}
Then
\begin{align}\label{maximum-1}
u_\e=G_{x_\e}-\lambda_\e-2\log\frac{1+\pi h(p)R^2}{R^2}-A_{x_\e}+o_\e(1)~~\text{on}~~\p B_{Rr_\e}(x_\e).
\end{align}
By (\ref{convergence}) and (\ref{ex-green}) one has on $\p B_{\delta}(x_\e)$ that
\begin{align*}
u_\e&=\overline{u_\e}+G_p+o_\e(1),\\
G_{x_\e}&=G_p+o_\e(1).
\end{align*}
Then we have
$$u_\e=\overline{u_\e}+G_{x_\e}+o_\e(1)~~\text{on}~~\p B_\delta(x_\e).$$
This and Lemma \ref{maxi} yield
\begin{align}\label{maximum-2}
u_\e\geq G_{x_\e}-\lambda_\e-C_3+o_\e(1)~~\text{on}~~\p B_\delta(x_\e).
\end{align}
From (\ref{maximum-1}) and (\ref{maximum-2}), one can choose a large number $C_4$ such that for small enough $\e>0$ and large enough $R>0$
\begin{align}\label{maximum-3}
u_\e\geq G_{x_\e}-\lambda_\e-C_4~~\text{on}~~\p \left(B_\delta(x_\e)\setminus B_{Rr_\e}(x_\e)\right).
\end{align}
Combining (\ref{maximum-0}) and (\ref{maximum-3}), the lemma follows by the maximum principle.
\end{proof}

We still need the following lemma.
\begin{lemma}\label{bd-ab}
For any $\a>1$, there holds
\begin{align*}
\lambda_\e+\a\overline{u_\e}\leq C.
\end{align*}
\end{lemma}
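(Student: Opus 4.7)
The plan is to obtain matching upper and lower bounds on $\int_M e^{\alpha u_\e}\,dv_g$ and compare them. For the upper bound, apply the Moser--Trudinger inequality (\ref{MT}) to the function $\alpha u_\e \in W^{1,2}(M)$, which yields
\[
\log\int_M e^{\alpha u_\e}\,dv_g \leq \frac{\alpha^2}{16\pi}\int_M|\nabla u_\e|^2\,dv_g + \alpha\overline{u_\e} + C.
\]
Then control $\|\nabla u_\e\|_2^2$ via the fact that $u_\e$ minimises $J_\e$ on $H_1$: since $J_\e(u_\e)\leq J_\e(u_0)\leq C$ for any fixed $u_0\in H_1$ and the logarithmic term of $J_\e$ vanishes on $H_1$, one obtains $\int_M|\nabla u_\e|^2\,dv_g\leq C-16\pi\overline{u_\e}$. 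Substituting produces
\[
\log\int_M e^{\alpha u_\e}\,dv_g \leq C - \alpha(\alpha - 1)\overline{u_\e}.
\]

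For the lower bound, restrict the integral to the bubble region $B_{Rr_\e}(x_\e)$, rescale by $y=(x-x_\e)/r_\e$, and use the profile (\ref{bubble}) together with dominated convergence to get, for any fixed $R>0$,
\[
\int_M e^{\alpha u_\e}\,dv_g \geq e^{(\alpha-1)\lambda_\e}\int_{B_R(0)}\frac{dy}{(1+\pi h(p)|y|^2)^{2\alpha}}\,(1+o_\e(1)),
\]
where the identity $r_\e^2 e^{\alpha\lambda_\e}=e^{(\alpha-1)\lambda_\e}$ has been used. Since $2\alpha>1$, the integrand is integrable at infinity, so for $R$ sufficiently large and $\e$ sufficiently small the right-hand side is bounded below by $c(\alpha)e^{(\alpha-1)\lambda_\e}$ for some $c(\alpha)>0$.

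Taking logarithms of the lower bound and comparing with the upper bound gives
\[
(\alpha-1)\lambda_\e + \log c(\alpha) \leq C - \alpha(\alpha-1)\overline{u_\e},
\]
which rearranges to $(\alpha-1)(\lambda_\e+\alpha\overline{u_\e})\leq C'$, and dividing by the positive factor $\alpha-1$ yields the asserted bound. There is no substantial obstacle once one chooses to apply Moser--Trudinger at level $\alpha u_\e$: the key algebraic observation is that the $\alpha^2$ produced by this rescaling combines with the energy estimate $\|\nabla u_\e\|_2^2\leq C-16\pi\overline{u_\e}$ to produce precisely the coefficient $\alpha(\alpha-1)$ on the $\overline{u_\e}$ side, which then pairs exactly with $(\alpha-1)\lambda_\e$ from the bubble lower bound to give the desired combination $\lambda_\e+\alpha\overline{u_\e}$.
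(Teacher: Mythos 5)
Your proposal is correct and follows essentially the same route as the paper: Moser--Trudinger applied to $\a u_\e$, the energy bound $\|\nabla u_\e\|_2^2\leq C-16\pi\overline{u_\e}$ extracted from the boundedness of $J_\e(u_\e)$ on $H_1$, and the bubble profile (\ref{bubble}) giving the lower bound $\log\int_M e^{\a u_\e}dv_g\geq(\a-1)\lambda_\e+C$. The only cosmetic difference is that the paper substitutes $J_\e(u_\e)$ directly into the Moser--Trudinger estimate rather than first isolating the Dirichlet energy bound, which is the same computation.
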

\begin{proof}
Let $\a>1$ be fixed. By the Moser-Trudinger inequality (\ref{MT}) one has
\begin{align}\label{bd-1}
\log\int_{M}e^{\a u_\e}dv_g&\leq\frac{\a^2}{16\pi}\int_{M}|\nabla u_\e|^2dv_g+\a \overline{u_\e}+C\nonumber\\
&=\frac{\a^2}{8\pi}J_\e(u_\e)+(-\frac{8\pi-\e}{8\pi}\a^2+\a)\overline{u_\e}+C\nonumber\\
&\leq(-\a^2+\a)\overline{u_\e}+C,
\end{align}
where in the last inequality we have used facts that $J_\e(u_\e)$ is bounded and $\overline{u_\e}\ra-\infty$.

On the other hand, one has
\begin{align*}
\int_Me^{\a u_\e}dv_g\geq\int_{B_{Rr_\e}(x_\e)}e^{\a u_\e}dv_g&=e^{(\a-1)\lam_\e}\left(\int_{\mathbb{R}^2}e^{\a \varphi_0}dx+o_\e(1)+o_R(1)\right)\\
&=e^{(\a-1)\lam_\e}\left(\frac{1}{(2\a-1)h(p)}+o_\e(1)+o_R(1)\right).
\end{align*}
So we have
\begin{align}\label{bd-2}
\log\int_Me^{\a u_\e}dv_g\geq (\a-1)\lam_\e+C.
\end{align}
Then the lemma follows by (\ref{bd-1}) and (\ref{bd-2}).
\end{proof}

Now we are prepared to derive a lower bound of $J_\e(u_\e)$ when $u_\e$ blows up.

First, it follows from (\ref{bubble}) that
\begin{align}\label{in}
\int_{B_{Rr_\e}(x_\e)}|\nabla u_\e|^2dv_g&=\int_{\mathbb{B}_{R}(0)}|\nabla_{\mathbb{R}^2}\varphi_0|^2dx+o_\e(1)\nonumber\\
&=16\pi\log(1+\pi h(p)R^2)-16\pi+o_\e(1)+o_R(1).
\end{align}

Second, (\ref{convergence}) and (\ref{ex-green}) yield
\begin{align}\label{out}
\int_{M\setminus B_{\delta}(x_\e)}|\nabla u_\e|^2dv_g&=\int_{M\setminus B_\delta(x_\e)}|\nabla (u_\e-\overline{u_\e})|^2dv_g\nonumber\\
&=\int_{M\setminus B_\delta(p)}|\nabla G_p|^2dv_g+o_\e(1)\nonumber\\
&=-\int_{\p B_\delta(p)}G_p\frac{\p G_p}{\p n}ds_g+o_\e(1).
\end{align}

Last, we estimate $\int_{B_\delta(x_\e)\setminus B_{Rr_\e}(x_\e)}|\nabla u_\e|^2dv_g$.
Since $u_\e$ satisfies equation (\ref{kw-eq1}), one has
\begin{align}\label{neck-1}
\int_{B_\delta(x_\e)\setminus B_{Rr_\e}(x_\e)}|\nabla u_\e|^2dv_g
=&-(8\pi-\e)\int_{B_\delta(x_\e)\setminus B_{Rr_\e}(x_\e)}u_\e dv_g\nonumber\\
 &+(8\pi-\e)\int_{B_\delta(x_\e)\setminus B_{Rr_\e}(x_\e)}he^{u_\e}u_\e dv_g\nonumber\\
 &+\int_{\p B_\delta(x_\e)}u_\e\frac{\p u_\e}{\p n}ds_g-\int_{\p B_{Rr_\e}(x_\e)}u_\e\frac{\p u_\e}{\p n}ds_g.
\end{align}
Using the fact $u_\e\in H_1$, (\ref{int=1}) and (\ref{int=0}), we have
\begin{align*}
\int_{B_\delta(x_\e)\setminus B_{Rr_\e}(x_\e)} he^{u_\e}dv_g=o_\e(1)+o_R(1).
\end{align*}
Then it follows from Lemma \ref{maximum} that
\begin{align}\label{neck-2}
(8\pi-\e)\int_{B_\delta(x_\e)\setminus B_{Rr_\e}(x_\e)}he^{u_\e}u_\e dv_g
\geq&(8\pi-\e)\int_{B_\delta(x_\e)\setminus B_{Rr_\e}(x_\e)}he^{u_\e}G_{x_\e}dv_g\nonumber\\
&-(8\pi-\e)\lambda_\e\int_{B_\delta(x_\e)\setminus B_{Rr_\e}(x_\e)}he^{u_\e}dv_g+o_\e(1)+o_R(1).
\end{align}
By equation (\ref{kw-eq1}) and the Green formula, one has that
\begin{align*}
(8\pi-\e)\int_{B_\delta(x_\e)\setminus B_{Rr_\e}(x_\e)}he^{u_\e}G_{x_\e}dv_g
=&-8\pi\int_{B_\delta(x_\e)\setminus B_{Rr_\e}(x_\e)}u_\e dv_g-\int_{\p B_\delta(x_\e)}G_{x_\e}\frac{\p u_\e}{\p n}ds_g\\
 &+\int_{\p B_\delta(x_\e)}u_\e\frac{\p G_{x_\e}}{\p n}ds_g+\int_{\p B_{Rr_\e}(x_\e)}G_{x_\e}\frac{\p u_\e}{\p n}ds_g\\
 &-\int_{\p B_{Rr_\e}(x_\e)}u_\e\frac{\p G_{x_\e}}{\p n}ds_g+(8\pi-\e)\int_{B_\delta(x_\e)\setminus B_{Rr_\e}(x_\e)}G_{x_\e}dv_g.
\end{align*}
Equation (\ref{kw-eq1}) also yields
\begin{align*}
-(8\pi-\e)\lambda_\e\int_{B_\delta(x_\e)\setminus B_{Rr_\e}(x_\e)}he^{u_\e}dv_g
=&-(8\pi-\e)\text{Vol}\left(B_\delta(x_\e)\setminus B_{Rr_\e}(x_\e)\right)\lam_\e\\
 &+\lam_\e\int_{\p B_{\delta}(x_\e)}\frac{\p u_\e}{\p n}ds_g-\lam_\e\int_{\p B_{Rr_\e}(x_\e)}\frac{\p u_\e}{\p n}ds_g.
\end{align*}
Taking these two estimates into (\ref{neck-2}) first and then putting (\ref{neck-2}) into (\ref{neck-1}), we have
\begin{align}\label{neck-3}
\int_{B_\delta(x_\e)\setminus B_{Rr_\e}(x_\e)}|\nabla u_\e|^2dv_g
=&-(16\pi-\e)\int_{B_\delta(x_\e)\setminus B_{Rr_\e}(x_\e)}u_\e dv_g-\int_{\p B_\delta(x_\e)}G_{x_\e}\frac{\p u_\e}{\p n}ds_g\nonumber\\
 &+\int_{\p B_\delta(x_\e)}u_\e\frac{\p G_{x_\e}}{\p n}ds_g+\int_{\p B_{Rr_\e}(x_\e)}G_{x_\e}\frac{\p u_\e}{\p n}ds_g\nonumber\\
 &-\int_{\p B_{Rr_\e}(x_\e)}u_\e\frac{\p G_{x_\e}}{\p n}ds_g+(8\pi-\e)\int_{B_\delta(x_\e)\setminus B_{Rr_\e}(x_\e)}G_{x_\e}dv_g\nonumber\\
 &+\lam_\e\int_{\p B_{\delta}(x_\e)}\frac{\p u_\e}{\p n}ds_g-\lam_\e\int_{\p B_{Rr_\e}(x_\e)}\frac{\p u_\e}{\p n}ds_g\nonumber\\
 &+\int_{\p B_\delta(x_\e)}u_\e\frac{\p u_\e}{\p n}ds_g-\int_{\p B_{Rr_\e}(x_\e)}u_\e\frac{\p u_\e}{\p n}ds_g\nonumber\\
 &-(8\pi-\e)\text{Vol}\left(B_\delta(x_\e)\setminus B_{Rr_\e}(x_\e)\right)\lam_\e+o_\e(1)+o_{\delta}(1).
\end{align}

By (\ref{convergence}) one has
\begin{align}\label{neck-31}
-\int_{\p B_\delta(x_\e)}G_{x_\e}\frac{\p u_\e}{\p n}ds_g=-\int_{\p B_\d(p)}G_p\frac{\p G_p}{\p n}ds_g+o_\e(1)
\end{align}
and
\begin{align}\label{neck-32}
\int_{\p B_\delta(x_\e)}u_\e\frac{\p G_{x_\e}}{\p n}ds_g=&\int_{\p B_\delta(x_\e)}(G_p+\overline{u_\e}+o_\e(1))\frac{\p G_{x_\e}}{\p n}ds_g\nonumber\\
=&\int_{\p B_\delta(x_\e)}G_p\frac{\p G_{x_\e}}{\p n}ds_g+\overline{u_\e}\int_{\p B_\delta(x_\e)}\frac{\p G_{x_\e}}{\p n}ds_g+o_\e(1)\nonumber\\
=&\int_{\p B_\delta(p)}G_p\frac{\p G_{p}}{\p n}ds_g-8\pi(1-\text{Vol}(B_\d(x_\e)))\overline{u_\e}+o_\e(1).
\end{align}
Using (\ref{kw-eq1}), (\ref{convergence}) and Lemma \ref{bd-ab} we have
\begin{align}\label{neck-33}
\lam_\e\int_{\p B_{\delta}(x_\e)}\frac{\p u_\e}{\p n}ds_g=&-\lam_\e\int_{M\setminus B_\d(x_\e)}\Delta u_\e dv_g\nonumber\\
=&-(8\pi-\e)(1-\text{Vol}(B_\d(x_\e)))\lam_\e+(8\pi-\e)\lam_\e\int_{M\setminus B_\d(x_\e)}he^{u_\e}dv_g\nonumber\\
=&-(8\pi-\e)(1-\text{Vol}(B_\d(x_\e)))\lam_\e+(8\pi-\e)\lam_\e e^{\overline{u_\e}}\int_{M\setminus B_\d(x_\e)}he^{u_\e-\overline{u_\e}}dv_g\nonumber\\
=&-(8\pi-\e)(1-\text{Vol}(B_\d(x_\e)))\lam_\e+o_\e(1)
\end{align}
and
\begin{align}\label{neck-34}
\int_{\p B_\d(x_\e)}u_\e\frac{\p u_\e}{\p n}ds_g=&\int_{\p B_\d(x_\e)}(G_p+\overline{u_\e}+o_\e(1))\frac{\p u_\e}{\p n}ds_g\nonumber\\
=&\int_{\p B_\d(p)}G_p\frac{\p G_p}{\p n}ds_g+\overline{u_\e}\int_{\p B_\d(x_\e)}\frac{\p u_\e}{\p n}ds_g+o_\e(1)\nonumber\\
=&\int_{\p B_\d(p)}G_p\frac{\p G_p}{\p n}ds_g-(8\pi-\e)(1-\text{Vol}(B_\d(x_\e)))\overline{u_\e}\nonumber\\
&+(8\pi-\e)\overline{u_\e} e^{\overline{u_\e}}\int_{M\setminus B_\d(x_\e)}he^{u_\e-\overline{u_\e}}dv_g+o_\e(1)\nonumber\\
=&\int_{\p B_\d(p)}G_p\frac{\p G_p}{\p n}ds_g-(8\pi-\e)(1-\text{Vol}(B_\d(x_\e)))\overline{u_\e}+o_\e(1).
\end{align}

By (\ref{maximum-1}) and (\ref{bubble}) one obtains
\begin{align}\label{neck-35}
&-\int_{\p B_{Rr_\e}(x_\e)}\frac{\p u_\e}{\p n}(u_\e-G_{x_\e}+\lambda_\e)ds_g\nonumber\\
=&\frac{8\pi^2h(p)R^2}{1+\pi h(p)R^2}(-A_{x_\e}-2\log\frac{1+\pi h(p)R^2}{R^2})+o_\e(1)+o_R(1)\nonumber\\
=&-8\pi A_p-16\pi\log\pi-16\pi\log h(p)+o_\e(1)+o_R(1).
\end{align}
From (\ref{convergence}) we know
\begin{align}\label{neck-36}
-\int_{\p B_{Rr_\e}(x_\e)}u_\e\frac{\p G_{x_\e}}{\p n}ds_g
=&-\lam_\e\int_{\p B_{Rr_\e}(x_\e)}\frac{\p G_{x_\e}}{\p n}ds_g-16\pi\log(1+\pi h(p)R^2)\nonumber\\
          &+o_\e(1)+o_R(1)\nonumber\\
=&8\pi(1-\text{Vol}(B_{Rr_\e}(x_\e)))\lam_\e-16\pi\log(1+\pi h(p)R^2)\nonumber\\
&+o_\e(1)+o_R(1).
\end{align}

One has by Lemma \ref{L_q} that
\begin{align}\label{neck-37}
\int_{B_\delta(x_\e)\setminus B_{Rr_\e}(x_\e)}u_\e dv_g
=&\int_{B_\delta(x_\e)\setminus B_{Rr_\e}(x_\e)}(u_\e-\overline{u_\e})dv_g+\text{Vol}(B_\delta(x_\e)\setminus B_{Rr_\e}(x_\e))\overline{u_\e}\nonumber\\
=&\text{Vol}(B_\delta(x_\e)\setminus B_{Rr_\e}(x_\e))\overline{u_\e}+o_\d(1).
\end{align}
It is clear that
\begin{align}\label{neck-38}
(8\pi-\e)\int_{B_\delta(x_\e)\setminus B_{Rr_\e}(x_\e)}G_{x_\e}dv_g=o_\e(1)+o_\delta(1),~~\lambda_\e \text{Vol}(B_{Rr_\e}(x_\e))=o_\e(1).
\end{align}
By Lemma \ref{bd-ab} one also has
\begin{align}\label{neck-39}
-\overline{u_\e}\text{Vol}(B_{Rr_\e}(x_\e))=o_\e(1).
\end{align}
Taking (\ref{neck-31})-(\ref{neck-39}) into (\ref{neck-3}), we have
\begin{align}\label{neck-last}
\int_{B_\delta(x_\e)\setminus B_{Rr_\e}(x_\e)}|\nabla u_\e|^2dv_g
\geq&\e\lam_\e-(16\pi-\e)\overline{u_\e}-16\pi\log(1+\pi h(p)R^2)\nonumber\\
&+\int_{B_\delta(p)}G_p\frac{\p G_p}{\p n}ds_g-8\pi A_p-16\pi\log\pi-16\pi\log h(p)\nonumber\\
&+o_\e(1)+o_R(1)+o_\d(1).
\end{align}

Finally, by (\ref{in}), (\ref{out}) and (\ref{neck-last}) we have
\begin{align*}
\int_{M}|\nabla u_\e|^2dv_g\geq&\e\lam_\e-(16\pi-\e)\overline{u_\e}-16\pi-8\pi A_p-16\pi\log\pi-16\pi\log h(p)\nonumber\\
&+o_\e(1)+o_R(1)+o_\d(1).
\end{align*}
Therefore,
\begin{align*}
J_\e(u_\e)=&\frac{1}{2}\int_M|\nabla u_\e|^2dv_g+(8\pi-\e)\overline{u_\e}\nonumber\\
\geq&\frac{\e}{2}(\lambda_\e-\overline{u_\e})-8\pi-4\pi A_p-8\pi\log\pi-8\pi\log h(p)+o_\e(1)+o_R(1)+o_\d(1)\nonumber\\
\geq&-8\pi-4\pi A_p-8\pi\log\pi-8\pi\log h(p)+o_\e(1)+o_R(1)+o_\d(1).
\end{align*}
Letting $\e\ra0$ first, then $R\ra+\infty$ and then $\d\ra0$, we have
\begin{align*}
\lim_{\e\ra0}\inf_{H_1}J_\e(u)\geq-8\pi-8\pi\log\pi-4\pi\max_{p\in M_+}(A_p+2\log h(p)).
\end{align*}
By this and Lemma 2.10 in \cite{DJLW97}, one has if $u_\e$ blows up
\begin{align}\label{low-bd}
\inf_{H_1}J(u)\geq-8\pi-8\pi\log\pi-4\pi\max_{p\in M_+}(A_p+2\log h(p)).
\end{align}

\section{Completion of the proof of Theorem \ref{thm}}

Let $\phi_\e$ be the blow up sequence constructed in \cite{DJLW97}.
If the Ding-Jost-Li-Wang condition (\ref{djlw-cond}) is satisfied on $M_+$,
we can repeat the proof in \cite{DJLW97} and show that for sufficiently small $\e$
\begin{align*}
J(\phi_\e)<-8\pi-8\pi\log\pi-4\pi\max_{p\in M_+}(A_p+2\log h(p)).
\end{align*}
It is easy to check that $\int_{M}he^{\phi_\e}dv_g>0$, and
$$\widetilde{\phi_\e}=\phi_\e-\log\int_{M}he^{\phi_\e}dv_g\in H_1.$$
Since $J(u+c)=J(u)$ for any constant $c$, we have for sufficiently small $\e$
\begin{align}\label{up-bd}
\inf_{H_1}J(u)\leq J(\widetilde{\phi_\e})=J(\phi_\e)<-8\pi-8\pi\log\pi-4\pi\max_{p\in M_+}(A_p+2\log h(p)).
\end{align}
It follows from (\ref{up-bd}) and (\ref{low-bd}) that $u_\e$ does not blow up. So $u_\e$ converges to some $u_0$ which minimizes $J$ in $H_1$ and solves
the Kazdan-Warner equation (\ref{kw-eq}). Standard elliptic estimates tell us $u_0$ is smooth. Finally, we finish the proof of Theorem \ref{thm}.\\

{\bf Acknowledgements} The author is supported by the National Science Foundation of China (Grant Nos. 11171347 and 11401575).
Part of this article was finished when the author visited School of Mathematics Science and China-France Mathematics Center at University of Science and Technology of China. He would like to thank them for their enthusiasm and the excellent working conditions they supplied for him.
 He thanks the referee for his/her careful review and very helpful suggestions.

{\bf Declarations} (Conflict of interest) The author declares there is no conflicts of interest.

\end{document}